\newtheorem{theorem}{Theorem}[section]
\newtheorem{lemma}[theorem]{Lemma}
\theoremstyle{definition}
\theoremstyle{remark}
\newcommand{\Div}{{\mathrm{Div}}}
\newcommand{\PDiv}{{\mathrm{PDiv}}}
\newcommand{\p}{\mbox{.}}
\newcommand{\Z}{\mathbb{Z}}
\newcommand{\R}{\mathbb{R}}
\newcommand{\st}{\mbox{ } | \mbox{ }}
\numberwithin{equation}{section}
\begin{document}

\title{Linear Systems on Edge-Weighted Graphs}

\author{Rodney James}
\address{Department of Mathematics, Colorado State University, Fort Collins, CO 80523}

\author{Rick Miranda}

\begin{abstract}
Let $R$ be any subring of the reals.
We present a generalization of linear systems on graphs where divisors are $R$-valued functions
on the set of vertices and graph edges are permitted to have nonegative weights in $R$.
Using this generalization, we provide an independent proof of a Riemann-Roch formula, which implies
the Riemann-Roch formula of Baker and Norine.
\end{abstract}

\maketitle



\section{Introduction}
Let $R$ be any subring of the reals and
$G$ be a finite connected edge-weighted graph with vertex set $V=\{v_0, \ldots, v_n \}$ and nonnegative 
weight set $W=\{w_{ij} \st i,j=0, \ldots, n\}$ where each $w_{ij} \in R$.  Multiple edges and loops are 
not allowed, and we set $w_{ij}=0$ if $v_{i}$ and $v_{j}$ are not connected; otherwise, $w_{ij}>0$.  
Note that $w_{ii}=0$ and $w_{ij}=w_{ji}$.
We will define the \emph{degree} of a vertex $v_{j}$ to be
\[
\deg(v_{j}) = \sum_{i=0}^{n} w_{ij},
\]
and the parameter $g$ (the \emph{genus} of the graph) to be
\[ 
g= 1  + \sum_{i<j} w_{ij}- |V| = \sum_{i<j} w_{ij} - n.
\]
Note that if $R=\Z$, these definitions coincide with the usual definitions for
the vertex degree and genus of a multigraph where $w_{ij}$ is the number of edges 
connecting vertices $v_{i}$ and $v_{j}$.

A \emph{divisor} on $G$ is a function $D:V \rightarrow R$.
The \emph{degree} of a divisor $D$ is defined as
\[ 
\deg(D) = \sum_{v \in V} D(v).
\]
For any $x \in \R$, we say that $D > x$ if $D(v) > x$ for each $v \in V$,
and $D > D'$ if $D(v) > D'(v)$ for each $v \in V$.

The space of divisors on $G$, written $\Div(G)$, is a $R$-module and
the subset of divisors on $G$ with degree zero is denoted by $\Div^{0}(G)$.

The \emph{canonical divisor} $K$ is defined by $K(v)= \deg(v) -2$ for any $v \in V$.  

For any $j \in \{0, \ldots, n\}$, consider the divisor $H_{j}$, defined by
\[ 
H_{j}(v_{i}) = \left\{ \begin{array}{ll} \deg(v_{j}) & 
\mbox{if } v_{i}=v_{j} \\ -w_{ij} & \mbox{otherwise.} \end{array} \right. 
\]
The \emph{principal} divisors $\PDiv(G)$ are the $\Z$-linear combinations of the $H_{j}$ divisors.
Two divisors $D,D' \in \Div(G)$ are linearly equivalent, written $D \sim D'$, if and only if there is 
a $H \in \PDiv(G)$ such that $D-D' = H$. 

For each divisor $D \in \Div(G)$, we associate a \emph{complete linear system} $|D|$, 
which is defined as
\[ 
|D| = \{ D' \in \Div(G) \st D' \sim D, D' > -1 \}. 
\]
The \emph{dimension} of $|D|$ is defined as
\[ 
\ell(D) = \min_{E} \{ \deg(E) \st E \in \Div(G), E \ge 0, |D-E| = \emptyset \}. 
\]

We will show the following Riemann-Roch formula holds on $G$.
\begin{theorem} \label{RRoch}
For any divisor $D \in \Div(G)$
\[
\ell(D) - \ell(K-D) = \deg(D) + 1 -g.
\]
\end{theorem}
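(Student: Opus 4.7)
The plan is to adapt the combinatorial proof of Baker and Norine to the $R$-valued setting. The three ingredients will be a notion of reduced divisor with respect to a chosen base vertex, giving canonical representatives for each divisor class; a characterization of when $|D|$ is empty purely in terms of the reduced representative; and a distinguished divisor $\nu$ of degree $g-1$ for which both $|\nu|$ and $|K-\nu|$ are empty. Riemann-Roch will then follow by combining the resulting formulas for $\ell(D)$ and $\ell(K-D)$ via the involution $D \mapsto K-D$.

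First I would fix a base vertex $v_0$ and call $D$ \emph{$v_0$-reduced} if $D(v_i) \ge 0$ for all $i \ne 0$ and for every nonempty $S \subseteq V \setminus \{v_0\}$ there exists $v_i \in S$ with $D(v_i) < \sum_{v_j \notin S} w_{ij}$. The core lemma is that every divisor class contains a unique $v_0$-reduced divisor. Existence would come from a reduction procedure in which, at each step, a ``burned'' set identified by an $R$-valued analog of Dhar's burning algorithm is fired by a real amount chosen to tighten one of the above inequalities; uniqueness by observing that if two reduced divisors differ by a principal $\sum c_i H_i \in \PDiv(G)$, then inspecting the vertex $v_i$ with maximal positive $c_i$ contradicts the reduced condition. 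Then I would show $|D| \ne \emptyset$ iff the reduced representative $D^*$ satisfies $D^*(v_0) > -1$, which recasts $\ell(D)$ as the infimum of $\deg E$ over effective $E$ such that the reduced representative of $D-E$ has $v_0$-coordinate at most $-1$.

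For the distinguished divisor, I would fix a linear ordering of $V$ and put $\nu(v_i) = \sum_{j > i} w_{ij} - 1$; a direct computation gives $\deg \nu = g - 1$, and the symmetry between $\nu$ and $K-\nu$ (the latter corresponding to the reverse ordering), combined with reducedness, makes both $|\nu|$ and $|K-\nu|$ empty. The Riemann-Roch identity then follows by combining the two $\ell$-formulas: the infima associated with $D$ and $K-D$ pair up through the involution $E \leftrightarrow F$ exchanging reducedness data on opposite sides, and add to $\deg D - \deg \nu = \deg D - g + 1$. The main obstacle will be the reduced-divisor step: Dhar's burning terminates in finitely many discrete firings in the integer case, but for general $R \subseteq \R$ a careful monovariant (such as a lexicographic order on coordinates or a Laplacian-energy argument) is required to guarantee that the iterative reduction procedure converges in finitely many steps. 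Once reduced divisors are in hand, the rest is formal manipulation with the involution $D \mapsto K-D$.
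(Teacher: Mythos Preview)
Your outline follows the paper's strategy almost exactly: reduced divisors with respect to $v_0$, a characterization of empty linear systems in terms of degree-$(g-1)$ extreme divisors, and the symmetry $D\mapsto K-D$. However, three concrete points would prevent your version from going through as written.

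First, you have misread the setup of $\PDiv(G)$: principal divisors are \emph{$\Z$-linear} combinations of the $H_j$, not $R$-linear. Consequently there is no ``firing by a real amount''; all chip-firing moves are integral even though divisor values lie in $R$. This dissolves your stated main obstacle: termination of the reduction process follows from a bounded-lattice argument (the set $\{c\in\Z^n_{\ge 0}: \phi(D+1)-\Delta_0 c>0\}$ is bounded by monotonicity of $\Delta_0$ and closed under coordinatewise max, so it has a unique maximal element). No real-valued monovariant is needed.

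Second, your thresholds are the integer ones and do not match the paper's $R$-valued conventions. Effectivity here means $D'>-1$, not $D'\ge 0$; accordingly the reduced conditions are $D(v)>-1$ on $V_0$ and, for each nonempty $I\subset\{1,\dots,n\}$, some $v_k$ with $D(v_k)\le \sum_{j\notin I} w_{jk}-1$ (equivalently $(D-\sum_{j\in I}H_j)(v)\le -1$), not your strict $D(v_k)<\sum_{j\notin S} w_{jk}$. For non-integral $R$ these are genuinely different and your version does not line up with $|D|$ as defined.

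Third, a \emph{single} $\nu$ from one fixed ordering is not enough. The paper needs the whole family $\mathcal{N}_0(G)$ (one divisor per permutation of $\{1,\dots,n\}$) to obtain the key equivalence ``$|D|=\emptyset$ iff $D\le N$ for some $N\in\mathcal{N}(G)$''; from this one gets the clean formula $\ell(D)=\min_{N\in\mathcal{N}(G)}\deg((D-N)^+)$, and the symmetry of $\mathcal{N}(G)$ under $N\mapsto K-N$ then gives $\ell(K-D)=g-1-\deg(D)+\min_{M}\deg((D-M)^+)$, whence Riemann--Roch by subtraction. Your final ``pairing'' paragraph gestures at this but does not supply the formula or the family, and without them the argument is incomplete.
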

This theorem generalizes a similar statement for integral divisors on multigraphs proved by Baker 
and Norine in \cite{BN}.
We showed in \cite{JM} that Theorem~\ref{RRoch} follows from the result in \cite{BN}.
Here, the proof of Theorem~\ref{RRoch} is independent, relying on the following theorem.

Define the set of all divisors of degree $g-1$ with empty linear systems by 
\[
\mathcal{N}(G) = \{ D \in \Div(G) \st \deg(D)=g-1 \mbox{ and } |D| = \emptyset \}.
\]
\begin{theorem} \label{TwoConditions} \hspace{2cm}
\begin{enumerate}
\item The set $\mathcal{N}(G)$ is symmetric with respect to $K$; that is,\\ 
$N \in \mathcal{N}(G)$ if and only if $K-N \in \mathcal{N}(G)$.

\item For any $D \in \Div(G)$, $|D|=\emptyset$ if and only if there is a $N \in \mathcal{N}(G)$ such that $D \le N$.
\end{enumerate}
\end{theorem}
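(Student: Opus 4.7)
The plan is to construct, for each linear order $\sigma$ of the vertex set, a distinguished divisor $\nu_\sigma$ of degree $g-1$ with $|\nu_\sigma| = \emptyset$, and then to show that every divisor with empty linear system is dominated coordinatewise by some such $\nu_\sigma$. Specifically, for each permutation $\sigma$ of $\{0, 1, \ldots, n\}$, I would set
$$\nu_\sigma(v_{\sigma(i)}) = \sum_{j < i} w_{\sigma(j)\sigma(i)} - 1.$$
A direct calculation gives $\deg(\nu_\sigma) = \sum_{i<j} w_{ij} - (n+1) = g - 1$, and, using $\deg(v_{\sigma(i)}) = \sum_j w_{\sigma(j)\sigma(i)}$, a second one-line calculation shows
$$(K - \nu_\sigma)(v_{\sigma(i)}) = \sum_{j > i} w_{\sigma(j)\sigma(i)} - 1 = \nu_{\sigma^{\mathrm{op}}}(v_{\sigma(i)}),$$
where $\sigma^{\mathrm{op}}$ denotes the reverse order. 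Hence $K - \nu_\sigma = \nu_{\sigma^{\mathrm{op}}}$.

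The rest of the proof would then reduce to two claims: (a) $|\nu_\sigma| = \emptyset$ for every $\sigma$, so that $\nu_\sigma \in \mathcal{N}(G)$; and (b) every $D$ with $|D| = \emptyset$ satisfies $D \le \nu_\sigma$ for some $\sigma$. Given (a) and (b), part (2) follows: one direction is immediate, since if $D \le N \in \mathcal{N}(G)$ and $D' \sim D$ with $D' > -1$, then $N + (D' - D) \sim N$ satisfies $N + (D'-D) \ge D' > -1$, contradicting $|N| = \emptyset$; the other direction is exactly (b) combined with (a). Part (1) also follows, because if $N \in \mathcal{N}(G)$ then by (b), $N \le \nu_\sigma$ for some $\sigma$, and equality of degrees forces $N = \nu_\sigma$, so $K - N = \nu_{\sigma^{\mathrm{op}}} \in \mathcal{N}(G)$ by (a).

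For (a) I would adapt Dhar's burning algorithm to the edge-weighted setting. Given a principal divisor $H = \sum_j c_j H_j$ with $c_j \in \Z$ and assuming $\nu_\sigma + H > -1$, one may normalize so that $\min_j c_j = 0$, and then examine the vertex $v_{\sigma(i_0)}$ with $c_{\sigma(i_0)} = 0$ of smallest $\sigma$-index. Because $\nu_\sigma(v_{\sigma(i)})$ encodes only the contributions from edges to strictly earlier vertices in the order, the structure of $H$ on such a ``sink'' vertex forces $(\nu_\sigma + H)(v_{\sigma(i_0)}) \le -1$, a contradiction. For (b), given $D$ with $|D| = \emptyset$, I would build $\sigma$ iteratively: pick $\sigma(0)$ to be a vertex where some principal-divisor translate of $D$ is still $\le -1$, then restrict to the complement and iterate, showing that the ordering so produced yields $D \le \nu_\sigma$.

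The main obstacle will be step (b). In the integer/multigraph setting, the analogous statement rests on the existence and uniqueness of $v_0$-reduced divisors together with Dhar's algorithm; extending the argument to $R$-valued divisors on graphs with $R$-valued weights requires care, since the principal divisors remain $\Z$-linear combinations of the $H_j$ while $D$ and the $w_{ij}$ are merely $R$-valued. I expect the bulk of the work to lie in showing that the iterative reduction terminates (or at least stabilizes at each stage) and that the resulting permutation truly witnesses $D \le \nu_\sigma$.
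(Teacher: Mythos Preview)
Your claim (b) is false as stated, and this breaks your derivation of part~(1). The set $\{\nu_\sigma : \sigma\}$ is finite, but $\mathcal{N}(G)$ is not: it is closed under adding principal divisors. Concretely, on the graph with two vertices and edge weight $w_{01}=1$ (so $g=0$), take $D=(-2,1)$. Then $\deg D=-1=g-1$ and $|D|=\emptyset$ (the translates are $(-2+m,\,1-m)$ for $m\in\Z$, and none satisfies both coordinates $>-1$), yet the only two $\nu_\sigma$ are $(-1,0)$ and $(0,-1)$, neither of which dominates $D$. So $D\in\mathcal{N}(G)$ but $D\ne\nu_\sigma$ for any $\sigma$; your step ``equality of degrees forces $N=\nu_\sigma$'' is simply wrong, and with it your proof of part~(1) collapses.

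What survives of your outline is correct and close to the paper's argument. The identity $K-\nu_\sigma=\nu_{\sigma^{\mathrm{op}}}$ is exactly the computation the paper carries out (there written with an extra shift by $H_0$, because the paper insists on $\sigma(0)=0$), and your Dhar-type sketch for (a) is the right idea. The repair for (b) is to weaken it to: if $|D|=\emptyset$ then $D\le N$ for some $N$ \emph{linearly equivalent} to a $\nu_\sigma$. The paper achieves this by first replacing $D$ by its unique $v_0$-reduced representative $D_0$; a greedy set-shrinking argument (your iterative construction, but applied to $D_0$ rather than to $D$ itself) then produces a permutation with $\sigma(0)=0$ and $D_0\le\nu_\sigma$, and translating back by the principal divisor $D-D_0$ gives $D\le\nu_\sigma+(D-D_0)\in\mathcal{N}(G)$. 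Part~(1) is then argued on equivalence classes: the reduced representative of any $N\in\mathcal{N}(G)$ is some $\nu_\sigma$, and $K-\nu_\sigma\sim\nu_{\sigma^{\mathrm{op}}}$ finishes it. Your plan to build $\sigma$ directly from $D$, without first normalizing within the linear equivalence class, cannot succeed.
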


In the following section, we prove Theorem~\ref{RRoch}, assuming Theorem~\ref{TwoConditions}.  
The proof of Theorem~\ref{TwoConditions}, which relies on a normal form for divisors, 
up to linear equivalence, follows in the subsequent sections.


\section{Proof of Riemann-Roch}
For any $D \in \Div(G)$, define
\begin{eqnarray*}
 D^{+}(v) &=&  \max(D(v),0) \\
 D^{-}(v) &=&  \min(D(v),0).
\end{eqnarray*}
It follows directly from these definitions that for any $D \in \Div(G)$,
\[
D = D^{+} + D^{-}
\] 
and
\[
\deg(D^{+}) = -\deg((-D)^{-}).
\]
\begin{lemma} \label{DegPlus}
If statement (2) of Theorem~\ref{TwoConditions} is true, then for any $D \in \Div(G)$,
\[\ell(D)=\min_{N \in \mathcal{N}(G)} \deg((D-N)^{+}). \]
\end{lemma}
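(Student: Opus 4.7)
The plan is to prove the claimed equality by matching the two sides against Theorem~\ref{TwoConditions}(2): the condition "$E \geq 0$ and $|D-E| = \emptyset$" in the definition of $\ell(D)$ should turn into "$E \geq 0$ and $E \geq D-N$ for some $N \in \mathcal{N}(G)$," which is precisely "$E \geq (D-N)^+$ for some $N \in \mathcal{N}(G)$." I would prove the two inequalities separately.

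For the bound $\ell(D) \leq \min_{N \in \mathcal{N}(G)} \deg((D-N)^+)$: fix any $N \in \mathcal{N}(G)$ and set $E := (D-N)^+$. By construction $E \geq 0$. A pointwise check shows $D - E \leq N$: at a vertex $v$ with $D(v) \geq N(v)$, we have $E(v) = D(v)-N(v)$ so $D(v) - E(v) = N(v)$; at a vertex $v$ with $D(v) < N(v)$, we have $E(v) = 0$ so $D(v) - E(v) = D(v) < N(v)$. Since $|N| = \emptyset$ and $D - E \leq N$, Theorem~\ref{TwoConditions}(2) gives $|D - E| = \emptyset$, so $E$ is feasible in the definition of $\ell(D)$, yielding $\ell(D) \leq \deg(E) = \deg((D-N)^+)$.

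For the reverse bound $\ell(D) \geq \min_{N \in \mathcal{N}(G)} \deg((D-N)^+)$: take any $E$ attaining the minimum in the definition of $\ell(D)$, so $E \geq 0$ and $|D-E| = \emptyset$. Invoking Theorem~\ref{TwoConditions}(2) in the other direction produces some $N \in \mathcal{N}(G)$ with $D - E \leq N$, equivalently $E \geq D - N$. Together with $E \geq 0$ this forces $E \geq (D-N)^+$ pointwise, and since the difference $E - (D-N)^+$ is a nonnegative function on $V$, taking sums over vertices preserves the inequality: $\deg(E) \geq \deg((D-N)^+) \geq \min_{N' \in \mathcal{N}(G)} \deg((D-N')^+)$.

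The proof is essentially definition-chasing combined with the two directions of Theorem~\ref{TwoConditions}(2); the only mild obstacle is the pointwise verification that $D - (D-N)^+ \leq N$, which must be split into cases according to the sign of $D(v) - N(v)$. No existence or compactness issues arise beyond those already implicit in the definition of $\ell(D)$, since each $N \in \mathcal{N}(G)$ explicitly produces a feasible $E$ and each feasible $E$ explicitly produces an $N$ realizing the same degree.
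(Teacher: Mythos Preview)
Your proof is correct and follows essentially the same approach as the paper's: both rewrite the condition $|D-E|=\emptyset$ via Theorem~\ref{TwoConditions}(2) as $E\ge D-N$ for some $N\in\mathcal{N}(G)$, and then observe that the minimal nonnegative $E$ with $E\ge D-N$ is $(D-N)^+$. The paper compresses this into a single chain of equalities, whereas you spell out the two inequalities and the pointwise case check for $D-(D-N)^+\le N$; your version is simply a more detailed rendering of the same argument.
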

\begin{proof}
The definition of $\ell(D)$ is
\[
\ell(D) = \min_{E} \{ \deg(E) \st E \ge 0, |D-E| = \emptyset.
\]
Property (2) of Theorem~\ref{TwoConditions} implies that
\begin{eqnarray*}
\ell(D) &=& \min_{E,N} \{ \deg(E) \st E \ge 0, N \in \mathcal{N}(G), D-E \le N \}\\
 &=& \min_{E,N} \{ \deg(E) \st E \ge 0, N \in \mathcal{N}(G), E \ge D-N \} 
\end{eqnarray*}
or equivalently,
\[ 
\ell(D)= \min_{N \in \mathcal{N}(G)} \deg((D-N)^{+}).
\]
\end{proof}
\begin{lemma} \label{DegPlusKminusD}
If Theorem~\ref{TwoConditions} holds, then for any $D \in \Div(G)$,
\[
\ell(K-D) = g-1-\deg(D) + \min_{M \in \mathcal{N}(G)} \deg((D-M)^{+}).
\]
\end{lemma}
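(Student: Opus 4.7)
The plan is to apply Lemma~\ref{DegPlus} to the divisor $K-D$ in place of $D$, then use the $K$-symmetry of $\mathcal{N}(G)$ from Theorem~\ref{TwoConditions}(1) to re-index the minimization, and finally use the identities relating $D^+$ and $D^-$ to convert the resulting expression into the desired form. The only real work is bookkeeping of signs and degrees.

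First I would write, directly from Lemma~\ref{DegPlus},
\[
\ell(K-D) = \min_{N \in \mathcal{N}(G)} \deg\bigl((K-D-N)^{+}\bigr).
\]
By part (1) of Theorem~\ref{TwoConditions}, the map $N \mapsto K - N$ is a bijection of $\mathcal{N}(G)$ onto itself, so substituting $N = K - M$ gives
\[
\ell(K-D) = \min_{M \in \mathcal{N}(G)} \deg\bigl((M-D)^{+}\bigr).
\]

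Next I would use the pointwise identity $(M-D)^+ = -(D-M)^-$, together with the decomposition $(D-M) = (D-M)^+ + (D-M)^-$, to obtain
\[
\deg\bigl((M-D)^{+}\bigr) = -\deg\bigl((D-M)^{-}\bigr) = \deg\bigl((D-M)^{+}\bigr) - \deg(D-M).
\]
Since every $M \in \mathcal{N}(G)$ has $\deg(M) = g-1$, we have $\deg(D-M) = \deg(D) - (g-1)$, so each term in the minimization equals $\deg((D-M)^+) + (g-1) - \deg(D)$. Pulling the constant out of the minimum yields the claimed formula.

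The main obstacle is essentially nil: the substance lies in the two inputs (Lemma~\ref{DegPlus} and the symmetry of $\mathcal{N}(G)$), and everything else is an algebraic manipulation of $D^+$ and $D^-$. The one point requiring a little care is verifying $(M-D)^+ = -(D-M)^-$ at each vertex, which is immediate from the definitions $D^+(v)=\max(D(v),0)$ and $D^-(v)=\min(D(v),0)$.
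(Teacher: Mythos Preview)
Your proof is correct and follows essentially the same route as the paper's: both arrive at $\ell(K-D)=\min_{M\in\mathcal{N}(G)}\deg((M-D)^{+})$ via the $K$-symmetry of $\mathcal{N}(G)$ and then rewrite using the $D^{+}/D^{-}$ identities and $\deg(M)=g-1$. The only cosmetic difference is that you invoke Lemma~\ref{DegPlus} for $K-D$ directly, whereas the paper re-derives that step from the definition of $\ell$ and property~(2).
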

\begin{proof}
From property (2) of Theorem~\ref{TwoConditions},
\begin{eqnarray*}
\ell(K-D) &=& \min_{E} \{ \deg(E) \st E \ge 0, |K-D-E| = \emptyset \} \\
           &=& \min_{E,M} \{ \deg(E) \st E \ge 0, M \in \mathcal{N}(G), K-D-E \le M \}.
\end{eqnarray*}
If $K-D-E \le M$ for $M \in \mathcal{N}(G)$, then $D+E \ge K-M$.  
Property (1) of Theorem~\ref{TwoConditions} implies that 
$K-M \in \mathcal{N}(G)$ if and only if $M \in \mathcal{N}(G)$,
thus we have 
\begin{eqnarray*}
\ell(K-D) &=& \min_{E,M} \{ \deg(E) \st E \ge 0, M \in \mathcal{N}(G), D+E \ge M \} \\
           &=& \min_{E,M} \{ \deg(E) \st E \ge 0, M \in \mathcal{N}(G), E \ge M-D \} \\
           &=& \min_{M \in \mathcal{N}(G)} \deg((M-D)^{+}).
\end{eqnarray*}           
Since $\deg((M-D)^{+}) = \deg(M-D) - \deg((M-D)^{-})$, we have
\begin{eqnarray*}
\ell(K-D) &=& \min_{M \in \mathcal{N}(G)} \deg((M-D)^{+}) \\
&=& \min_{M \in \mathcal{N}(G)} \left( \deg(M-D) - \deg((M-D)^{-}) \right) \\
&=& \deg(M) - \deg(D) + \min_{M \in \mathcal{N}(G)} \left(  - \deg((M-D)^{-}) \right) \\
&=& g-1-\deg(D) + \min_{M \in \mathcal{N}(G)} \deg((D-M)^{+}).
\end{eqnarray*}      
\end{proof}
We now have the ingredients to prove Theorem~\ref{RRoch}.
\begin{proof} (Theorem ~\ref{RRoch})
Using Lemmas \ref{DegPlus} and \ref{DegPlusKminusD}, we have
\begin{eqnarray*}
\ell(D) - \ell(K-D) &=& \left( \min_{N \in \mathcal{N}(G)}\deg((D-N)^{+}) \right) \\
 & & - \left(g-1-\deg(D) + \min_{M \in \mathcal{N}(G)} \deg((D-M)^{+}) \right) \\
                      &=& \deg(D) -g +1 + \min_{N \in \mathcal{N}(G)}\deg((D-N)^{+}) \\
                      & & - \min_{M \in \mathcal{N}(G)} \deg((D-M)^{+}) \\
                      &=& \deg(D) -g +1.
\end{eqnarray*}                      
\end{proof}


\section{Reduced Divisors}
Let $V_{0}=V - \{v_{0}\}$.
We say that a divisor $D \in \Div(G)$ is \emph{reduced} if and only if
\begin{enumerate}
\item $D(v) > -1$ for each $v \in V_{0}$, and
\item for every $I \subset \{1, \ldots, n\}$, there is a $v \in V_{0}$ such that 
\[
( D - \sum_{j \in I} H_{j} ) (v) \le -1.
\]
\end{enumerate}

Define $\mathcal{P}(G) \subset \PDiv(G)$ to be the set of non-negative, non-zero 
$\Z$-linear combinations of the $H_{j}$ divisors for $j > 0$; that is, if $H \in \mathcal{P}(G)$
then there is a set of nonnegative integers $\{c_{1}, \ldots, c_{n}\}$ such that
\[H = \sum_{j=1}^{n} c_{j} H_{j}.\]
\begin{lemma} \label{subset}
Suppose a divisor $D(v) > -1$ for all $v \in V_{0}$, then
$D$ is reduced if and only if for every $H \in \mathcal{P}(G)$, there is a $v \in V_{0}$ such that 
\[
(D-H)(v) \le -1.
\]
\end{lemma}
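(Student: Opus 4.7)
The plan is to handle the two directions of the biconditional separately. The \emph{if} direction is immediate: for any nonempty $I \subset \{1,\ldots,n\}$, the divisor $\sum_{j \in I} H_j$ is a nonzero nonnegative integer combination of $H_1,\ldots,H_n$, hence lies in $\mathcal{P}(G)$, so specializing the hypothesis to this $H$ supplies exactly the vertex needed in condition (2) of the definition of reduced.

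For the \emph{only if} direction, assume $D$ is reduced and consider a general $H = \sum_{j=1}^n c_j H_j$ with $c_j \in \Z_{\ge 0}$ not all zero. Set $M = \max_j c_j$ and slice $H$ by the level sets of its coefficient vector: for $k = 1,\ldots,M$ let $I_k = \{j \in \{1,\ldots,n\} : c_j \ge k\}$. This produces a nested chain $I_1 \supseteq I_2 \supseteq \cdots \supseteq I_M \ne \emptyset$ together with the identity
\[
H \;=\; \sum_{k=1}^M \sum_{j \in I_k} H_j,
\]
since each $c_j$ equals the number of indices $k$ for which $j \in I_k$. The strategy is then to apply the reducedness of $D$ to the smallest level set $I_M$ in order to produce a single vertex $v_l \in V_0$ with $(D - \sum_{j \in I_M} H_j)(v_l) \le -1$, and to argue that the same $v_l$ already witnesses $(D - H)(v_l) \le -1$.

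Carrying this out, I would first observe that the chosen $v_l$ must satisfy $l \in I_M$: otherwise $(\sum_{j \in I_M} H_j)(v_l) = -\sum_{j \in I_M} w_{jl} \le 0$, and combined with the standing hypothesis $D(v_l) > -1$ this contradicts the bound at $v_l$. Because $I_M \subseteq I_k$ for every $k$, this also forces $l \in I_k$ for each $k \le M$, and a direct expansion using $\deg(v_l) = \sum_{u=0}^n w_{ul}$ gives
\[
\Bigl(\sum_{j \in I_k} H_j\Bigr)(v_l) \;=\; w_{0l} + \sum_{j \in \{1,\ldots,n\} \setminus I_k} w_{jl} \;\ge\; 0.
\]
Subtracting these $M-1$ nonnegative quantities (one for each $k < M$) from the displayed bound at $k = M$ yields $(D - H)(v_l) \le -1$, which is the desired conclusion. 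The main obstacle is recognizing the level-set decomposition of $H$ and noticing that the vertex returned by reducedness for the smallest level set $I_M$ must itself lie in $I_M$; once those two observations are in place, the remainder is a routine nonnegativity check.
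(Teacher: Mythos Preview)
Your proof is correct. Both your argument and the paper's hinge on the same key subset, namely the indices where the coefficient vector attains its maximum ($I_M$ in your notation, $\{i : c_i = \alpha\}$ in the paper's). The paper proceeds by contrapositive: assuming $(D-H)(v_j) > -1$ for all $j$, it shows by a direct case split ($b_j = 0$ versus $b_j = 1$) that the same inequality holds with $H$ replaced by $\sum_{i : c_i = \alpha} H_i$, contradicting reducedness. You instead argue the direct implication and package the computation via the level-set decomposition $H = \sum_{k=1}^M \sum_{j \in I_k} H_j$, which lets you reduce the inequality at $v_l$ to a sum of nonnegative ``subset firing'' values once you observe $l \in I_M \subseteq I_k$. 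The two are logically dual and of comparable length; your level-set viewpoint is perhaps more conceptual and generalizes cleanly to other chip-firing contexts, while the paper's direct inequality manipulation is slightly more self-contained.
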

\begin{proof}
Assume that $D(v) > -1$ for all $v \in V_{0}$.

If for every $H \in \mathcal{P}(G)$, there is a $v \in V_{0}$ such that $(D-H)(v) \le -1$,
then $D$ is clearly reduced, thus we need only show the converse is true.

Suppose that there exists a $H = \sum_{i=1}^{n} c_{i} H_{i} \in \mathcal{P}(G)$ such that
\[
(D-H)(v) > -1
\] 
for all $v \in V_{0}$.  
This means that for each $j=1, \ldots, n$
\[ 
D(v_j) > c_j \deg(v_j)- \sum_{i=1}^n c_i w_{ij} -1. 
\]
Let $\alpha = \max\{c_1, \ldots, c_n\}$ and for each $i=1, \ldots, n$ set
\[ 
b_i = \left\{ \begin{array}{ll} 1 & \mbox{if } c_i = \alpha \\ 0 & \mbox{otherwise}. \end{array} \right.
\]
We claim that for each $j \in \{1,\ldots,n\}$
\[
D(v_{j}) > b_j \deg(v_j) - \sum_{i=1}^n b_i w_{ij} -1.
\]
If $b_j=0$, we have
\[ 
b_j \deg(v_j) - \sum_{i=1}^n b_i w_{ij} -1 =  - \sum_{i=1}^n b_i w_{ij} -1 \le -1 < D(v_{j}).
\]
Define the index sets $A_{j}$ and $B_{j}$ as
\begin{eqnarray*}
A_j &=& \{ i>0  \st w_{ij} > 0 \mbox{ and } c_i < \alpha \}\\
B_j &=& \{ i>0  \st w_{ij} > 0 \mbox{ and } c_i = \alpha \}
\end{eqnarray*}
and note that
\[
\sum_{i=1}^{n} c_{i} w_{ij} = \sum_{i \in A_j} c_i w_{ij} + \alpha \sum_{i \in B_j} w_{ij}.
\]
If $b_j=1$, then $c_j = \alpha$ and 
\begin{eqnarray*}
D(v_{j}) & > & c_j \deg(v_j) - \sum_{i=1}^n c_i w_{ij} -1 \\
          &=& \alpha \deg(v_j) - \sum_{i \in A_j} c_i w_{ij} - \alpha \sum_{i \in B_j} w_{ij} -1 \\
          &=& \alpha ( w_{0j} + \sum_{i \in A_j} w_{ij} + \sum_{i \in B_j} w_{ij}) - \sum_{i \in A_j}
              c_i w_{ij} - \alpha \sum_{i \in B_j} w_{ij} -1 \\
          &=& \alpha w_{0j} + \sum_{i \in A_j} (\alpha - c_i) w_{ij} -1 \\
          &\ge& w_{0j} + \sum_{i \in A_j} w_{ij} -1 \p
\end{eqnarray*}
Also, we have
\begin{eqnarray*}
b_j \deg(v_j) - \sum_{i=1}^n b_i w_{ij} 
&=& \deg(v_j) - \sum_{i \in B_j} w_{ij} \\
&=& w_{0j} + \sum_{i \in A_j} w_{ij} + \sum_{i \in B_j} w_{ij} - \sum_{i \in B_j} w_{ij} \\
&=& w_{0j} + \sum_{i \in A_j} w_{ij}
\end{eqnarray*}
thus $D(v_{j}) > b_j \deg(v_j) - \sum_{i=1}^n b_i w_{ij} -1 $ for each $j=1,\ldots, n$,
hence $D$ is not reduced.
\end{proof}

Let $\Delta_{0}$ be the \emph{edge-weighted reduced Laplacian} of $G$, 
which can be represented by the $n \times n$ matrix
\[
\Delta_{0} = \left( \begin{array}{cccc} 
\deg(v_{1}) & -w_{12} & \cdots & -w_{1n} \\
-w_{12} & \deg(v_{2}) & \cdots & -w_{2n} \\
 \vdots & \vdots & \ddots & \vdots \\
-w_{1n} & -w_{2n} & \cdots & \deg(v_{n})
\end{array} \right).
\]
For $x=(x_{1},\ldots,x_{n})$ and $y=(y_{i},\ldots,y_{n})$, we say that
$x > y$ if and only if $x_{i} > y_{i}$ for each $i$; for any scalar $a \in \R$,
$x > a$ if and only if $x_{i} > a$ for each $i$; finally, we define 
\[
\max(x,y)=(\max\{x_{1},y_{1}\}, \ldots, \max\{x_{n},y_{n}\})
\] 
and 
$x > a$ if and only if $x_{i} > a$ for each $i$; finally, we define 
\[
\min(x,y)=(\min\{x_{1},y_{1}\}, \ldots, \min\{x_{n},y_{n}\}).
\]
We showed in~\cite{JM} that $\Delta_{0}$ is monotone; that is, 
for any $x \in \R^{n}$, if $\Delta_{0}x \ge 0$, then $x \ge 0$.
Monotonicity implies that $\Delta^{-1}_{0}$ exists and is nonnegative,
and that if $x,y \ge 0$ with $y \ge \Delta_{0} x$, then $\Delta_{0}^{-1}y \ge x$
(see \cite{BP}).

\begin{lemma} \label{zero-bound}
For any $z \in \R^{n}$ such that $z \ge 0$, there is a $c \in \Z^{n}$ such that $c \ge 0$ and 
$\Delta_{0}c \ge z$.
\end{lemma}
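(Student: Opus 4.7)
The plan is to find a nonnegative integer vector of the form $c = \lceil N u \rceil$ (componentwise), where $u \in \R^n_{\ge 0}$ is chosen so that $\Delta_0(Nu)$ already lies well above $z$, so that the rounding perturbation $\Delta_0(c - Nu)$ can be absorbed by taking $N$ large. The natural candidate is the ``unit flow'' vector $u := \Delta_0^{-1}\mathbf{1}$, which exists and satisfies $u \ge 0$ by the nonnegativity of $\Delta_0^{-1}$ noted in the paragraph preceding the lemma.

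Concretely, I would set $c := \lceil Nu \rceil$ componentwise for a positive integer $N$ to be chosen. Then $c \in \Z^n_{\ge 0}$, the error vector $\delta := c - Nu$ lies in $[0,1)^n$, and
\[
\Delta_0 c \;=\; N\mathbf{1} + \Delta_0 \delta.
\]
Each entry $(\Delta_0\delta)_i = \deg(v_i)\,\delta_i - \sum_{j\ne i,\, j>0} w_{ij}\,\delta_j$ is bounded below by $-\sum_{j\ne i,\, j>0} w_{ij} \ge -D$, where $D := \max_i \deg(v_i)$ depends only on $G$. Picking any integer $N \ge D + \max_i z_i$ therefore forces $\Delta_0 c \ge (N - D)\mathbf{1} \ge z$, completing the argument.

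No step presents a serious obstacle, but it is instructive to note why the simpler choice $c = (t,t,\ldots,t)$ fails: one computes $\Delta_0 c = t(w_{01}, \ldots, w_{0n})$, which has a zero entry at every vertex of $V_0$ not adjacent to $v_0$, so the constant-scaling strategy cannot dominate an arbitrary $z$ in those coordinates. Replacing $\mathbf{1}$ by $\Delta_0^{-1}\mathbf{1}$ precisely repairs this: inverting $\Delta_0$ implicitly propagates firings through the connected graph, so that $\Delta_0(Nu) = N\mathbf{1}$ is strictly positive in every coordinate, and the uniformly bounded rounding error is then absorbed by the choice of $N$.
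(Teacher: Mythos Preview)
Your argument is correct and proceeds along a genuinely different line from the paper's. The paper argues abstractly: it shows that the feasible set $C_z=\{x\in\R^n:\Delta_0 x\ge z\}$ is a translate of the full-dimensional convex cone $C_0=\{x:\Delta_0 x\ge 0\}$, and then appeals to the general principle that any affine cone in $\R^n$ with nonempty interior must contain a lattice point; monotonicity of $\Delta_0$ guarantees that this lattice point is automatically nonnegative. Your proof, by contrast, is entirely constructive: setting $u=\Delta_0^{-1}\mathbf{1}\ge 0$ and $c=\lceil Nu\rceil$, you exhibit the explicit integer vector and the explicit threshold $N\ge \max_i\deg(v_i)+\max_i z_i$ beyond which $\Delta_0 c\ge z$ holds. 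What you gain is transparency---an effective bound in terms of $G$ and $z$, and no unstated lemma about lattice points in cones; what the paper's approach buys is brevity and a clean conceptual picture of $C_z$ as a shifted cone, which may be more reusable if one later needs further structural information about the set of solutions.
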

\begin{proof}
Fix $z \in \R^{n}$ with $z \ge 0$.  
Let $C_{0} = \{x \in \R^{n} \st \Delta_{0} x \ge 0\}$, 
$C_{z} = \{ x \in \R^{n} \st \Delta_{0} x \ge z \}$ and
$K = \{ x \in \R^{n} \st x \ge 0\}$.  Since $\Delta_{0}$ is monotone, 
$C_{z} \ne \emptyset$ and $C_{z} \subset C_{0} \subset K$.  

Let $x,y \in C_{0}$ and $\alpha, \beta \in \R$ with
$\alpha,\beta \ge 0$, then
\[
\Delta_{0}(\alpha x + \beta y ) = \alpha \Delta_{0}x + \beta \Delta_{0}y \ge 0,
\] 
and $\alpha x + \beta y \in C_{0}$.  Thus $C_{0}$ is a convex cone,
and since $\Delta_{0}$ is injective, $C_{0}$ has an interior.
Let $v=\Delta_{0}^{-1}z$.  For any $x \in C_{z}$, 
\[
\Delta_{0}(x-v) = \Delta_{0}x - z \ge 0
\]
so $x-v \in C_{0}$ and $C_{z}-v$ is also a convex cone,  thus $C_{z}$ is a convex affine cone 
with an interior. Hence $C_{z} \cap \Z^{n} \ne \emptyset$.
\end{proof}

Define the function $\phi:\Div(G) \rightarrow R^{n}$ as
\[
\phi(D) = (D(v_{1}), \ldots, D(v_{n})).
\]
We can represent any $H = \sum_{i=1}^{n} c_{i} H_{i} \in \PDiv(G)$ as
\[
\phi(H)=\Delta_{0}c
\]
where $c=(c_{1}, \ldots, c_{n}) \in \Z^{n}$; $H(v_{0})$ can be recovered by
\[
H(v_{0}) = \sum_{i=1}^{n} c_{i} H_{i}(v_{0}) = - \sum_{i=1}^{n} c_{i} w_{0i}.
\]
For any $d \in R^{n}$, define $\mathcal{A}(d) \subset \Z^{n}$ be
\[
\mathcal{A}(d)= \{ c \in \Z^{n} \st  c \ge 0, d - \Delta_{0} c > 0 \}.
\]
Note that if $d>0$, $\mathcal{A}(d) \ne \emptyset$ since the 
zero vector $(0, \ldots, 0) \in \mathcal{A}(d)$.

Let $D \in \Div(G)$ and again set $d=\phi(D+1)$.  
Using the above notation, it follows directly from Lemma~\ref{subset} 
that $D$ is reduced if and only if 
\begin{enumerate}
\item $d>0$, and
\item $\mathcal{A}(d) = \{ 0 \}$,  the zero vector in $\Z^{n}$.
\end{enumerate}

\begin{lemma} \label{bounded}
Let $D \in \Div(G)$ and set $d=\phi(D+1)$.  If $d>0$ then $\mathcal{A}(d)$ is bounded; 
that is, there exists $b \in \R^{n}$ such that $b \ge c$ for all $c \in \mathcal{A}(d)$.
\end{lemma}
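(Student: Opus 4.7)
The plan is to produce an explicit upper bound for $\mathcal{A}(d)$ using the monotonicity of the reduced Laplacian $\Delta_0$, which has already been recorded in the paragraph preceding Lemma~\ref{zero-bound}.

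I would take any $c \in \mathcal{A}(d)$ and unpack the definition: we have $c \ge 0$ and $d - \Delta_0 c > 0$; in particular $\Delta_0 c \le d$. The hypothesis of the lemma gives $d > 0$, so in fact $d \ge 0$ as well. This puts us exactly in the position to apply the previously cited consequence of monotonicity: if $x, y \ge 0$ and $y \ge \Delta_0 x$, then $\Delta_0^{-1} y \ge x$. Taking $x = c$ and $y = d$, we conclude
\[
c \le \Delta_0^{-1} d.
\]
Thus setting $b = \Delta_0^{-1} d \in \R^n$ gives a vector with $b \ge c$ for every $c \in \mathcal{A}(d)$, which is the desired bound.

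The only step requiring care is checking that the monotonicity principle is applied correctly, namely verifying that both $c$ and $d$ are nonnegative (needed so that $\Delta_0^{-1}(d - \Delta_0 c) \ge 0$ translates into $c \le \Delta_0^{-1} d$). Both follow immediately from the definition of $\mathcal{A}(d)$ and the assumption $d > 0$, so there is no real obstacle; the substance of the lemma has already been absorbed into the earlier statement that $\Delta_0^{-1}$ is entrywise nonnegative. The boundedness of $\mathcal{A}(d)$ therefore reduces to a single application of this fact.
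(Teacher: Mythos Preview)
Your proof is correct and essentially identical to the paper's own argument: both take $b=\Delta_0^{-1}d$ and invoke the monotonicity of $\Delta_0$ (equivalently, nonnegativity of $\Delta_0^{-1}$) to deduce $b\ge c$ from $d\ge\Delta_0 c$. You are simply more explicit in verifying the nonnegativity hypotheses, but the substance is the same.
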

\begin{proof}
Suppose $d>0$ and $c \in \mathcal{A}(d)$.  Since $d > \Delta_{0}c$,
since $\Delta_{0}$ is monotone, we have $b=\Delta_{0}^{-1} d \ge c$.
\end{proof}

\begin{lemma} \label{maximum}
Let $D \in \Div(G)$ and set $d=\phi(D+1)$.  If $d>0$ and $c,c' \in \mathcal{A}(d)$, then
$\max(c,c') \in \mathcal{A}(d)$.
\end{lemma}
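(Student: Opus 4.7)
The plan is to check the three defining conditions for $m := \max(c,c')$ to lie in $\mathcal{A}(d)$: that $m \in \Z^n$, that $m \ge 0$, and that $d - \Delta_{0} m > 0$. The first two are immediate from the fact that $c,c' \in \Z^n$ with $c,c' \ge 0$, so the entire content of the lemma is the inequality $d - \Delta_{0} m > 0$.

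The key structural observation is that the off-diagonal entries of $\Delta_{0}$ are the non-positive numbers $-w_{ij}$. This means that replacing any $c_i$ by a larger value $m_i \ge c_i$ \emph{increases} the off-diagonal contribution $+\,w_{ij} m_i$ appearing in $(d - \Delta_{0} m)_j$. So if I can arrange the diagonal terms correctly, the inequality will follow automatically.

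I would argue componentwise. Fix $j \in \{1,\ldots,n\}$. By definition of $m$, either $m_j = c_j$ or $m_j = c'_j$; by symmetry between $c$ and $c'$, assume without loss of generality that $m_j = c_j$. Expanding using $(\Delta_{0} x)_j = \deg(v_j)\,x_j - \sum_{i \ne j, i>0} w_{ij} x_i$, I get
\begin{align*}
(d - \Delta_{0} m)_j
&= d_j - \deg(v_j)\, c_j + \sum_{i \ne j, i>0} w_{ij}\, m_i \\
&\ge d_j - \deg(v_j)\, c_j + \sum_{i \ne j, i>0} w_{ij}\, c_i \\
&= (d - \Delta_{0} c)_j,
\end{align*}
where the middle inequality uses $m_i \ge c_i$ and $w_{ij} \ge 0$. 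Since $c \in \mathcal{A}(d)$, the right-hand side is strictly positive. This holds for every $j$, so $d - \Delta_{0} m > 0$ and $m \in \mathcal{A}(d)$.

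There is no real obstacle here; the lemma is essentially a formal consequence of the Z-matrix structure of $\Delta_{0}$ (non-positive off-diagonal entries). The only thing to be careful about is keeping the strict inequality after the replacement step, which is why I compare to $(d-\Delta_{0} c)_j$ rather than trying to bound from below directly.
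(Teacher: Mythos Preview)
Your proof is correct and follows essentially the same componentwise argument as the paper: fix $j$, assume $m_j=c_j$, and use $m_i\ge c_i$ together with $w_{ij}\ge 0$ to bound $(d-\Delta_0 m)_j\ge (d-\Delta_0 c)_j>0$. The only cosmetic difference is that the paper first rewrites $(\Delta_0 c)_j$ as $c_j w_{0j}+\sum_{i=1}^n (c_j-c_i)w_{ij}$ before making the comparison, whereas you work directly with the standard Laplacian form; the underlying idea is identical.
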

\begin{proof}
Suppose $d>0$ and $c,c' \in \mathcal{A}(d)$,  then we have both $d - \Delta_{0}c >0$
and $d - \Delta_{0}c' >0$.
We can write the $j$th component of $\Delta_{0}c$ as
\begin{eqnarray*} 
(\Delta_{0}c)_{j} &=& c_{j} \deg(v_{j}) -  \sum_{i=1}^{n} c_{i} w_{ij} \\
                  &=&  \sum_{i=0}^{n} c_{j}w_{ij} -  \sum_{i=1}^{n} c_{i} w_{ij} \\
                  &=& c_{j}w_{0j} + \sum_{i=1}^{n} (c_{j} - c_{i})w_{ij}
\end{eqnarray*}
and similarly for $\Delta_{0}c'$.  For $d=(d_{1}, \ldots, d_{n})$, we then have
\begin{eqnarray*}
d_{j} &>& c_j w_{0j} + \sum_{i=1}^n (c_j -c_i) w_{ij}  \\
d_{j} &>& c'_j w_{0j}+ \sum_{i=1}^n (c'_j -c'_i) w_{ij}  
\end{eqnarray*}
for each $j$.
If $\max\{c_j,c'_j\}=c_j$, then 
\[ 
d_{j} >c_j w_{0j}+ \sum_{i=1}^n (c_j - \max\{c_i,c'_i\}) w_{ij} 
\]
and if $\max\{c_j,c'_j\}=c'_j$, 
\[ 
d_{j} > c'_j w_{0j} + \sum_{i=1}^n (c'_j - \max\{c_i,c'_i\}) w_{ij}.
\]
We can combine these two relations to get
\[
d_{j} > \max\{c_j,c'_j\} w_{0j} + \sum_{i=1}^n (\max\{c_j,c'_j\} - \max\{c_i,c'_i\}) w_{ij} 
\]
for each $j >0$, thus 
\[
d - \Delta_{0} \max(c,c') > 0
\] 
and $\max(c,c') \in \mathcal{A}(d)$.
\end{proof}
\begin{theorem} \label{reduced}
For any $D \in \Div(G)$ there is a unique $D_{0} \sim D$ such that $D_{0}$ is reduced.
\end{theorem}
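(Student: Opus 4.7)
The plan is to establish existence by constructing a reduced representative as a ``maximal reduction'' of any translate of $D$ lying in the box $\{\phi > -1\}$, and to establish uniqueness by a case analysis on the positive and negative parts of the integer vector relating two putative reduced representatives.

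For existence, given $D \in \Div(G)$, I would first use Lemma~\ref{zero-bound} to produce some $D' \sim D$ with $D'(v) > -1$ for all $v \in V_0$: pick any nonnegative $z \in \R^n$ with $z \ge -\phi(D)$, apply the lemma to obtain $c \in \Z^n$ with $c \ge 0$ and $\Delta_0 c \ge z$, and set $D' = D + \sum_j c_j H_j$, so that $\phi(D') \ge 0$. Let $d = \phi(D'+1) > 0$. Then $\mathcal{A}(d)$ contains the zero vector, is bounded above in $\R^n$ by Lemma~\ref{bounded} (hence is finite as a subset of $\Z^n_{\ge 0}$), and is closed under componentwise maximum by Lemma~\ref{maximum}; consequently it has a unique maximum element $c^*$. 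I would set $D_0 = D' - \sum_j c^*_j H_j \sim D$ and verify reducedness: condition (1) holds because $\phi(D_0+1) = d - \Delta_0 c^* > 0$, and condition (2), in the equivalent form $\mathcal{A}(\phi(D_0+1)) = \{0\}$, follows from maximality, since any $c \in \mathcal{A}(d - \Delta_0 c^*)$ produces $c^* + c \in \mathcal{A}(d)$, whence $c^* + c \le c^*$ and $c = 0$.

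For uniqueness, suppose $D_0, D_1$ are both reduced and linearly equivalent; write $D_1 - D_0 = \sum_j c_j H_j$ with $c \in \Z^n$ and decompose $c = a - b$ with $a, b \in \Z^n_{\ge 0}$ of disjoint support. Assuming $c \ne 0$, I may take $b \ne 0$ (otherwise swap $D_0$ and $D_1$) and consider the divisor $\tilde D = D_0 - \sum_j b_j H_j = D_1 - \sum_j a_j H_j$. Since $\sum_j b_j H_j \in \mathcal{P}(G)$, Lemma~\ref{subset} applied to $D_0$ yields some $v_k \in V_0$ with $\tilde D(v_k) \le -1$. I would rule this out by three vertex cases: if $k \in \mathrm{supp}(a)$, then $b_k = 0$ and the first expression gives $\tilde D(v_k) = D_0(v_k) + \sum_{j \ne k} b_j w_{jk} \ge D_0(v_k) > -1$; if $k \in \mathrm{supp}(b)$, then $a_k = 0$ and the second expression gives $\tilde D(v_k) = D_1(v_k) + \sum_{j \ne k} a_j w_{jk} \ge D_1(v_k) > -1$; otherwise the first expression again yields $\tilde D(v_k) \ge D_0(v_k) > -1$. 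Each case contradicts $\tilde D(v_k) \le -1$, forcing $c = 0$.

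The main obstacle is the mixed-sign case of uniqueness: Lemma~\ref{subset} applied to either $D_0$ or $D_1$ does not by itself rule out $c \ne 0$, because the offending divisor $\tilde D$ admits two symmetric presentations. The key move is to exploit the disjointness of $\mathrm{supp}(a)$ and $\mathrm{supp}(b)$ to select, vertex by vertex, the presentation in which the subtracted element of $\PDiv(G)$ has no diagonal $\deg(v_k)$ contribution at that vertex, so that its net contribution becomes a nonnegative combination of the $w_{jk}$ and $\tilde D$ is forced to stay above $-1$ throughout $V_0$.
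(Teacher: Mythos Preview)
Your existence argument is essentially identical to the paper's: translate into the region $d>0$ via Lemma~\ref{zero-bound}, take the maximum element of $\mathcal{A}(d)$ using Lemmas~\ref{bounded} and~\ref{maximum}, and read off reducedness from maximality.

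Uniqueness is where you depart from the paper. The paper's proof stops at the observation that $\hat{c}$ is the unique maximum of $\mathcal{A}(d)$ and declares the resulting $D_0$ unique, without arguing why a reduced representative obtained from a different initial translate must agree. Your disjoint-support argument is a genuinely different, self-contained route: you compare two reduced representatives directly, exploit the two presentations $\tilde D = D_0 - \sum_j b_j H_j = D_1 - \sum_j a_j H_j$, and at each vertex $v_k$ select the presentation for which the coefficient on $H_k$ vanishes, so that the subtracted principal divisor contributes only off-diagonal terms $-w_{jk}$ and hence cannot push the value at $v_k$ down to $-1$. This is correct (using $H_0 = -\sum_{j\ge 1} H_j$ to write the difference over $H_1,\dots,H_n$, and injectivity of $\Delta_0$ to make $c$ well-defined), and it buys you a uniqueness proof that is independent of the construction and of any choice made in it; the paper's approach, taken at face value, would still owe an argument that the output does not depend on the initial~$c$.
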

\begin{proof}
Let $D \in \Div(G)$. By Lemma~\ref{zero-bound}, choose $c \in \Z^{n}$ so that  
$\Delta_{0}c \ge \phi(-(D^{-}))$, noting that $-(D^{-}) \ge 0$.
Set $d=\phi(D+1)+\Delta_{0}c$, which guarantees that $d>0$.  

By Lemmas \ref{bounded} and \ref{maximum},
$\hat{c}=\max\{c' \st c' \in \mathcal{A}(d)\}$ exists and is unique.
We claim that $\mathcal{A}(d-\Delta_{0}\hat{c})=\{0\}$.  
Let $c' \in \mathcal{A}(d-\Delta_{0}\hat{c})$, then 
\[
d -\Delta_{0}\hat{c} - \Delta_{0}c' = d - \Delta_{0}(\hat{c}+c') > 0,
\]
thus $\hat{c}+c' \in \mathcal{A}(d)$.  Since $\hat{c}$ is maximal in $\mathcal{A}(d)$
and $\hat{c},c' \ge 0$, we must have $c'=0$.
It then follows from Lemma~\ref{subset} that there is a unique reduced divisor $D_{0}$ such that
$\phi(D_{0}) = d - \Delta_{0}\hat{c}$, where $D_{0} \sim D$ since the translations $\Delta_{0}c$ 
and $\Delta_{0}\hat{c}$ correspond to some $H \in \PDiv(G)$.
\end{proof}


\section{Empty Linear Systems}

In this section, we will exploit properties of reduced divisors to determine the set of 
divisors which have empty linear systems.  We begin with the following property of reduced divisors.
\begin{lemma}\label{ReducedGenusLemma}
If $D \in \Div(G)$ is reduced, then
\[
\sum_{v \in V_{0}} D(v) \le g,
\]
with equality if and only if there exists a permutation 
$(j_{1},j_{2}, \ldots, j_{n})$ of $(1,2,\ldots,n)$
such that
\[ 
D(v_{j_{k}}) = \sum_{i=0}^{k-1} w_{j_{i}j_{k}} -1 
\]
for each $k=1, \ldots, n$, where $j_{0}=0$.
\end{lemma}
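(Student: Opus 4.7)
The natural strategy is to use condition (2) in the definition of reduced to construct the permutation $(j_1, \ldots, j_n)$ inductively, one index at a time. Starting from $j_0 = 0$ and $I_0 = \emptyset$, at step $k+1$ we set $S_k = \{1, \ldots, n\} \setminus I_k$ (nonempty while $k < n$) and apply condition (2) with $I = S_k$ to produce a vertex $v_{j_{k+1}} \in V_0$ satisfying
\[ \Bigl( D - \sum_{j \in S_k} H_j \Bigr)(v_{j_{k+1}}) \le -1. \]
Then put $I_{k+1} = I_k \cup \{j_{k+1}\}$ and iterate.

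The first thing to verify is that the construction actually yields a permutation, i.e.\ $j_{k+1} \notin I_k$. If we had $j_{k+1} = j_\ell$ for some $\ell \le k$, then $j_{k+1} \notin S_k$, so $H_j(v_{j_{k+1}}) = -w_{j j_{k+1}} \le 0$ for every $j \in S_k$; the displayed inequality then forces $D(v_{j_{k+1}}) \le -1$, contradicting condition (1) of reducedness. Once we know $j_{k+1} \in S_k$, the term $j = j_{k+1}$ in the sum contributes $\deg(v_{j_{k+1}})$ while each $j \in S_{k+1} = S_k \setminus \{j_{k+1}\}$ contributes $-w_{j j_{k+1}}$. Expanding $\deg(v_{j_{k+1}}) = \sum_{i=0}^{n} w_{i j_{k+1}}$ and cancelling the terms indexed by $S_{k+1}$, what is left is exactly $\sum_{i=0}^{k} w_{j_i j_{k+1}}$, so the inequality collapses to
\[ D(v_{j_{k+1}}) \le \sum_{i=0}^{k} w_{j_i j_{k+1}} - 1. \]

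Summing this over $k = 0, 1, \ldots, n-1$ gives $\sum_{v \in V_0} D(v) \le -n + \sum_{0 \le i < k \le n} w_{j_i j_k}$. Since $(j_0, j_1, \ldots, j_n)$ is a permutation of $(0, 1, \ldots, n)$ and $w_{ab} = w_{ba}$, the double sum on the right equals $\sum_{a < b} w_{ab}$, so the bound becomes $\sum_{v \in V_0} D(v) \le \sum_{a<b} w_{ab} - n = g$. Equality in this bound forces each of the $n$ inequalities above to be tight, producing the required permutation and formula; the converse direction (that the stated formula for $D(v_{j_k})$ implies equality in the sum) is an immediate summation. The main obstacle is the inductive construction itself --- specifically checking $j_{k+1} \notin I_k$ so the $j_k$ really do exhaust $\{1,\ldots,n\}$ --- after which everything reduces to the telescoping identity computed above.
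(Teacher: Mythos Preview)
Your proof is correct and follows essentially the same approach as the paper: both inductively shrink the index set, applying condition (2) at each stage to extract $j_k$ and the inequality $D(v_{j_k}) \le \sum_{i=0}^{k-1} w_{j_i j_k} - 1$, then sum to obtain the genus bound. If anything you are more careful than the paper, since you explicitly verify $j_{k+1} \notin I_k$ (using condition (1)), a point the paper's proof simply asserts.
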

\begin{proof}
Suppose that $D$ is reduced, then for every
$I \subset \{1, \ldots, n\}$, there is a $j \in V_{0}$ such that
\begin{equation} \label{subcond}
D(v_{j}) \le \deg(v_{j}) - \sum_{i \in I} w_{ij} -1 = \sum_{i \notin I} w_{ij} - 1.
\end{equation}
Suppose that $I=I_{0}=\{1, \ldots, n\}$, and that that (\ref{subcond}) is satified for 
$j=j_{1} \in I_{0}$, then
\[ 
D(v_{j_{1}}) \le \sum_{i \notin I_{0}} w_{i j_{1}} - 1 = w_{0j_{1}} -1.
\]
Now let $I=I_{1}=I_{0} - \{j_{1}\}$, then (\ref{subcond}) is satified for
$j=j_{2} \in I_{1}$ so that
\[ 
D(v_{j_{2}}) \le \sum_{i \notin I_{1}} w_{ij_{2}} -1 = w_{0j_{2}} + w_{j_{1}j_{2}}-1.
\]
Similarly, for $I=I_{2}=I_{1}-\{j_{2}\}$, (\ref{subcond}) is satisfied for
$j=j_{3} \in I_{2}$ and
\[ 
D(v_{j_{3}}) \le \sum_{i \notin I_{2}} w_{ij_{3}} -1 
= w_{0j_{3}} + w_{j_{1}j_{3}} + w_{j_{2}j_{3}} -1.
\]
Continuing this process, let $I_{k} = I_{k-1} - \{j_{k}\}$ 
for $k=1, \ldots, n-1$, where $j=j_{k} \in I_{k-1}$ 
satisfies (\ref{subcond}) for $I_{k-1}$,  and we have in general
\begin{equation} \label{g-reduced}
D(v_{j_{k}}) \le \sum_{i=0}^{k-1} w_{j_{i} j_{k}} -1
\end{equation}
where $j_{0}=0$.  Note that the resulting $n$-tuple $(j_{1},j_{2}, \ldots, j_{n})$ 
is a permutation of $(1,2, \ldots, n)$.
If we rewrite (\ref{g-reduced}) as
\[ 
D(v_{j_{k}}) - \sum_{i=0}^{k-1} w_{j_{i} j_{k}} +1 \le 0 
\]
and sum over all $k$, we have
\[ 
\sum_{k=1}^{n} \left( D(v_{j_{k}}) - \sum_{i=0}^{k-1} w_{j_{i}j_{k}} + 1 \right) 
= \sum_{j=1}^{n} D(v_{j}) - \sum_{i<j} w_{ij} + n \le 0 
\]
or equivalently,
\[ 
\sum_{v \in V_{0}} D(v) \le \sum_{i<j} w_{ij} - n = g.
\]

For the equality condition, we assume again the $D$ is reduced and first note that if 
\[
D(v_{j_{k}}) = \sum_{i=0}^{k-1} w_{j_{i}j_{k}} -1
\] 
holds for some $(j_{1}, \ldots, j_{n})$ for each $k =1, \ldots, n$, then 
\[
\sum_{v \in V_{0}} D(v) = g
\]
follows directly since 
\[
\sum_{i<j} w_{ij} - n = g.
\]
For the other direction, if 
\[
\sum_{v \in V_{0}} D(v) = g,
\] 
since $D$ is reduced, 
(\ref{g-reduced}) holds at each $k$ for some permutation $(j_{1},\ldots,j_{n})$,
thus the only way that we can have equality is for
\[ 
D(v_{j_{k}}) = \sum_{i=0}^{k-1} w_{j_{i}j_{k}} -1 
\]
for each $k=1, \ldots, n$.
\end{proof}

An immediate application of Lemma~\ref{ReducedGenusLemma} and Theorem~\ref{reduced} 
gives a sufficient condition for a divisor to have a nonempty linear system.
\begin{lemma} \label{gminusone}
Let $D \in \Div(G)$.   If $\deg(D)>g-1$ then $|D| \ne \emptyset$.
\end{lemma}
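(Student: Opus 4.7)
The plan is to reduce this to a direct application of Theorem~\ref{reduced} combined with the genus bound from Lemma~\ref{ReducedGenusLemma}. Recall that $|D|$ is nonempty precisely when some representative of the linear equivalence class of $D$ satisfies $D'(v) > -1$ at every vertex $v \in V$, so we just need to produce such a representative.

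First I would invoke Theorem~\ref{reduced} to pick the unique reduced divisor $D_{0} \sim D$. By the definition of reduced, condition (1) already gives $D_{0}(v) > -1$ for every $v \in V_{0}$, so the only thing left to verify is that $D_{0}(v_{0}) > -1$ as well.

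Next I would use that degree is a linear-equivalence invariant, so $\deg(D_{0}) = \deg(D) > g-1$. Writing
\[
D_{0}(v_{0}) = \deg(D_{0}) - \sum_{v \in V_{0}} D_{0}(v),
\]
Lemma~\ref{ReducedGenusLemma} gives $\sum_{v \in V_{0}} D_{0}(v) \le g$, and combining these two inequalities yields $D_{0}(v_{0}) > (g-1) - g = -1$. Thus $D_{0} > -1$ everywhere, so $D_{0} \in |D|$ and $|D| \ne \emptyset$.

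There is no real obstacle here; the content of the lemma is entirely absorbed by the previous two results. The one small point to double-check is that the inequality $\deg(D) > g-1$ together with $\sum_{v\in V_0}D_0(v)\le g$ really does force $D_{0}(v_{0}) > -1$ strictly (and not merely $\ge -1$), which it does because the first inequality is strict.
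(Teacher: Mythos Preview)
Your proof is correct and follows essentially the same route as the paper: pass to the unique reduced representative $D_{0}$ via Theorem~\ref{reduced}, use Lemma~\ref{ReducedGenusLemma} to bound $\sum_{v\in V_{0}} D_{0}(v)\le g$, and combine with $\deg(D_{0})=\deg(D)>g-1$ to force $D_{0}(v_{0})>-1$. Your closing remark about the strictness of the inequality is exactly the point that makes the argument go through.
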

\begin{proof}
Let $D$ be a divisor with $\deg(D)>g-1$, and let $D_{0}$ be the unique reduced divisor 
such that $D_{0} \sim D$ from Theorem~\ref{reduced}.
By Lemma~\ref{ReducedGenusLemma} 
\[ 
\sum_{v \in V_{0}} D_{0}(v) \le g.
\]
By assumption we have
\[ 
\deg(D)=\deg(D_{0})=D_{0}(v_{0}) + \sum_{v \in V_{0}} D_{0}(v) > g-1,
\] 
or equivalently
\[ 
D_{0}(v_{0}) > -\sum_{v \in V_{0}} D_{0}(v) +g -1,
\]
thus $D_{0}(v_{0})>-1$.  Since $D_{0}(v)>-1$ for each $v \in V_{0}$, $|D| \ne \emptyset$.
\end{proof}
\begin{lemma} \label{reduced-empty}
If $D_{0}$ be a reduced divisor, then $|D_{0}| \ne \emptyset$ if and only if $D_{0}(v_{0}) > -1$.
\end{lemma}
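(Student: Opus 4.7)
The forward direction is immediate: if $D_0(v_0) > -1$, combining with condition (1) of reducedness gives $D_0(v) > -1$ for every $v \in V$, so $D_0$ itself belongs to $|D_0|$.

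For the converse, my plan is to establish a stronger intermediate claim: if $D_0$ is reduced and $D' \sim D_0$ satisfies $D'(v) > -1$ for every $v \in V_0$, then in the unique representation $D' = D_0 + \sum_{j=1}^n c_j H_j$ (obtained by eliminating $H_0$ via the relation $\sum_{j=0}^n H_j = 0$) we must have $c = (c_1,\ldots,c_n) \ge 0$. Granted this, the converse follows at once: from $c \ge 0$ and $w_{0j} \ge 0$, one computes $D'(v_0) = D_0(v_0) + H(v_0) = D_0(v_0) - \sum_{j=1}^n c_j w_{0j} \le D_0(v_0)$, so $D_0(v_0) \ge D'(v_0) > -1$.

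To prove the intermediate claim, I would set $d = \phi(D_0) + 1$, translate the hypothesis on $V_0$ into $d + \Delta_0 c > 0$, and split $c = c^+ - c^-$ where $c^{\pm}_j = \max(\pm c_j, 0)$. The goal is to show $c^- \in \mathcal{A}(d)$; since $D_0$ is reduced, the characterization of reducedness recorded just before Lemma~\ref{bounded} forces $\mathcal{A}(d) = \{0\}$, hence $c^- = 0$ and $c \ge 0$. Coordinatewise, $d_j - (\Delta_0 c^-)_j > 0$ splits into two cases. If $c_j \ge 0$, then $c^-_j = 0$ and $(\Delta_0 c^-)_j = -\sum_i c^-_i w_{ij} \le 0$, while $d_j > 0$ by reducedness. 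If $c_j < 0$, then $c^+_j = 0$ makes $(\Delta_0 c^+)_j = -\sum_i c^+_i w_{ij} \le 0$, and the linearity identity $\Delta_0 c^- = \Delta_0 c^+ - \Delta_0 c$ then yields $(\Delta_0 c^-)_j \le -(\Delta_0 c)_j$, so $d_j - (\Delta_0 c^-)_j \ge d_j + (\Delta_0 c)_j > 0$ by the translated hypothesis.

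The main technical step is this coordinatewise bound for $c^-$, which is essentially the same $\max$-style trick that powered Lemma~\ref{maximum}, now applied to the decomposition $c = c^+ - c^-$ of a single integer vector rather than to two elements of $\mathcal{A}(d)$; the care required is matching the sign assumption on $c_j$ to the correct half of the argument. Once the intermediate claim $c \ge 0$ is in hand, the conclusion $D_0(v_0) > -1$ is a one-line consequence of the explicit formula for $H(v_0)$.
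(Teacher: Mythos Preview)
Your argument is correct and follows the paper's overall strategy: given $D' = D_0 + P > -1$ with $P \in \PDiv(G)$, constrain $P$ via reducedness to conclude $P(v_0) \le 0$, hence $D_0(v_0) \ge D'(v_0) > -1$. The paper asserts without justification that such a $P$ must satisfy $P(v) \ge 0$ for all $v \in V_0$; you instead prove, via the $c^+/c^-$ decomposition and the characterization $\mathcal{A}(d)=\{0\}$, the intermediate claim that the coefficient vector $c$ in $P=\sum_{j=1}^n c_j H_j$ is nonnegative, and then read off $P(v_0)=-\sum_j c_j w_{0j}\le 0$ directly.

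Your intermediate claim is in fact the right one: the paper's stated claim is false as written. On the triangle with $w_{01}=w_{02}=1$ and $w_{12}=10$, the divisor $D_0=(3,0,9)$ is reduced, and with $P=2H_1+H_2=(-3,12,-9)$ one has $D_0+P=(0,12,0)>-1$ while $P(v_2)=-9<0$. Nonetheless $c=(2,1)\ge 0$ and $P(v_0)=-3\le 0$, so the conclusion survives. Thus your proof not only supplies the missing details but also replaces an erroneous intermediate assertion with the correct one.
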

\begin{proof}
Let $D_{0} \in \Div(G)$ be reduced.
If $D_{0}(v_{0}) > -1$, then $D_{0}(v) > -1$ for all $v \in V$ and $|D_{0}| \ne \emptyset$.

Now assume that $|D_{0}| \ne \emptyset$, thus there is a $P \in \PDiv(G)$ such that $D_{0}+P > -1$.
Since $D_{0}$ is reduced, the only $P \in \PDiv(G)$ which would satisfy 
$D_{0}+P > -1$ must have $P(v) \ge 0$ for all $v \in V_{0}$.  Since $\deg(P)=0$, $P(v_{0}) \le 0$,
thus we must have $D_{0}> -1$ in order for $|D_{0}|$ to be nonempty.
\end{proof}

\begin{lemma} \label{N-set}
If $D_{0}$ is a reduced divisor with $\deg(D_{0})=g-1$ and $|D_{0}|=\emptyset$, then
\[ 
D_{0}(v_{j_{l}}) 
= \left\{ \begin{array}{ll} -1 & l=0 \\ \sum_{i=0}^{l-1} w_{j_{i}j_{l}} - 1 & l >0 \end{array} \right. 
\]
where $j_{0}=0$ and $(j_{1}, \ldots, j_{n})$ is a permutation of $(1, \ldots, n)$.
\end{lemma}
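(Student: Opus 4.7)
The plan is to combine the empty linear system hypothesis with the two key results already established about reduced divisors: Lemma~\ref{reduced-empty} (which controls the value at $v_0$) and Lemma~\ref{ReducedGenusLemma} (which controls the sum over $V_0$ together with the equality case).

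First I would pin down $D_0(v_0)$. Since $D_0$ is reduced, $|D_0| = \emptyset$, and Lemma~\ref{reduced-empty} says $|D_0| \ne \emptyset$ iff $D_0(v_0) > -1$; therefore $D_0(v_0) \le -1$. On the other hand, Lemma~\ref{ReducedGenusLemma} gives $\sum_{v \in V_0} D_0(v) \le g$, so from $\deg(D_0) = g-1$ we obtain
\[
D_0(v_0) = (g-1) - \sum_{v \in V_0} D_0(v) \ge (g-1) - g = -1.
\]
Combining the two inequalities forces $D_0(v_0) = -1$, which is exactly the $l=0$ case of the claim.

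Next, the same identity now reads $\sum_{v \in V_0} D_0(v) = g$, so we are in the equality case of Lemma~\ref{ReducedGenusLemma}. That lemma tells us that there is a permutation $(j_1, \dots, j_n)$ of $(1, \dots, n)$ with
\[
D_0(v_{j_k}) = \sum_{i=0}^{k-1} w_{j_i j_k} - 1 \qquad (k = 1, \dots, n),
\]
where $j_0 = 0$. This is precisely the $l > 0$ case of the stated formula, finishing the proof.

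There is no real obstacle here — the statement is essentially a direct repackaging of Lemma~\ref{reduced-empty} and the equality clause of Lemma~\ref{ReducedGenusLemma}. The only thing to be careful about is keeping the convention $j_0 = 0$ consistent with the $l = 0$ value $-1$, which matches because the empty sum contributes $0$ and so the formula $\sum_{i=0}^{-1} w_{j_i j_0} - 1$ would read $-1$ as well; thus the two cases could even be written uniformly, but stating them separately matches the lemma as given.
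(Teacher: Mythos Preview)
Your proof is correct and follows essentially the same route as the paper: invoke Lemma~\ref{reduced-empty} to get $D_0(v_0)\le -1$, then use Lemma~\ref{ReducedGenusLemma} together with $\deg(D_0)=g-1$ to force $D_0(v_0)=-1$ and land in the equality case. If anything, you are more explicit than the paper in spelling out the sandwich $-1 \le D_0(v_0) \le -1$.
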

\begin{proof}
By Lemma~\ref{reduced-empty} $D_{0}(v_{0}) \le -1$,
thus by Lemma~\ref{ReducedGenusLemma} we then have 
\[
\sum_{i=1}^{n}D_{0}(v_{i})=g, 
\]
$D_{0}(v_{0})=-1$, and
\[ 
D_{0}(v_{j_{l}}) = \sum_{i=0}^{l-1} w_{j_{i}j_{l}} - 1 
\]
for some permutation $(j_{1}, \ldots, j_{n})$ of $(1, \ldots, n)$.
\end{proof}
We will denote the reduced divisors in Lemma~\ref{N-set} as
\[ 
\mathcal{N}_{0}(G) = \{ D \in \Div(G) \st |D|=\emptyset, \deg(D)=g-1, D \mbox{ is reduced } \} 
\subset \mathcal{N}(G), 
\]
noting that $|\mathcal{N}_{0}(G)| \le n!$.

A direct consequence of Lemma~\ref{N-set} then gives us the composition of $\mathcal{N}(G)$,
which is a lattice generated by $\mathcal{N}_{0}(G)$.
\begin{lemma} \label{N-set-full}
$\mathcal{N}(G) = 
\left\{ D \in \Div(G) \st D \sim D_{0} \mbox{ where } D_{0} \in \mathcal{N}_{0}(G) \right\}$.
\end{lemma}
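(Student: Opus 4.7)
The plan is to prove the two set inclusions separately, using Theorem~\ref{reduced} (existence and uniqueness of a reduced representative in each linear equivalence class) together with the fact that linear equivalence preserves both the degree and the emptiness of the linear system.

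For the reverse inclusion, suppose $D \sim D_{0}$ for some $D_{0} \in \mathcal{N}_{0}(G)$. Since principal divisors have degree zero, $\deg(D) = \deg(D_{0}) = g-1$. Moreover, $|D| = \emptyset$ if and only if $|D_{0}| = \emptyset$, because a divisor $D'$ satisfies $D' \sim D$ and $D' > -1$ precisely when $D' \sim D_{0}$ and $D' > -1$. Hence $D \in \mathcal{N}(G)$.

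For the forward inclusion, let $D \in \mathcal{N}(G)$, so $\deg(D) = g-1$ and $|D| = \emptyset$. By Theorem~\ref{reduced}, there is a unique reduced divisor $D_{0} \sim D$. Again by degree preservation and the preservation of linear system emptiness under linear equivalence, $\deg(D_{0}) = g-1$ and $|D_{0}| = \emptyset$, so $D_{0}$ is a reduced divisor satisfying the defining conditions of $\mathcal{N}_{0}(G)$. Thus $D$ is linearly equivalent to a member of $\mathcal{N}_{0}(G)$, as claimed.

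There is really no main obstacle here: the lemma is essentially a bookkeeping statement that follows formally once one has Theorem~\ref{reduced} and the elementary observation that both $\deg$ and $|{\cdot}|$ are invariants of linear equivalence. The only mild point worth stating explicitly in the proof is this invariance of the linear system, which is immediate from the definition $|D| = \{D' \sim D \st D' > -1\}$.
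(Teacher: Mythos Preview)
Your proof is correct and follows essentially the same approach as the paper: for the forward inclusion you pass to the unique reduced representative via Theorem~\ref{reduced} and observe it lies in $\mathcal{N}_{0}(G)$. In fact your write-up is slightly more complete, since the paper only records the forward inclusion and leaves the (trivial) reverse inclusion implicit.
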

\begin{proof}
If $D \in \mathcal{N}(G)$, then by Lemma~\ref{reduced} there is a $D_{0} \in \mathcal{N}_{0}(G)$ 
that is linearly equivalent to $D$.
\end{proof}

We can now prove Theorem~\ref{TwoConditions}.
\begin{proof} (Theorem~\ref{TwoConditions})

\begin{enumerate}
\item Since any $D \in \mathcal{N}(G)$ can be written as $D=N_{0}+P$ for some $P \in \PDiv(G)$
and $N_{0} \in \mathcal{N}_{0}(G)$, it is sufficient to assume $D=N_{0}$.

By Lemma~\ref{N-set}, 
\[
N_{0}(v_{j_{0}})=-1
\] 
and 
\[
N_{0}(v_{j_{k}}) = \sum_{i=0}^{k-1} w_{j_{i}j_{k}} - 1
\]
for some permutation $(j_{1}, \ldots, j_{n})$ of $(1, \ldots, n)$ with $j_{0}=0$. Since 
\[
K(v_{i})= \sum_{j=0}^{n} w_{ij} - 2,
\] 
for $k>0$ we have
\[
(K-D)(v_{j_{k}}) = \sum_{i=0}^{n} w_{j_{i}j_{k}} - 2 - \sum_{i=0}^{k-1} w_{j_{i}j_{k}} + 1 
= \sum_{i=k}^{n} w_{j_{i}j_{k}} -1 
\]
and for $k=0$
\[ 
(K-D)(v_{j_{0}}) = \sum_{i=0}^{n} w_{j_{i}j_{0}} - 1 = \deg(v_{j_{0}}) -1.
\]
If we subtract $H_{0} \in \PDiv(G)$ from $K-D$, we have
\[
(K-D-H_{0})(v_{j_{0}}) = -1
\]
and for $k>0$,
\[
(K-D-H_{0})(v_{j_{k}}) = \sum_{i=k}^{n} w_{j_{i}j_{k}} -1 + w_{j_{0}j_{k}}.
\]
Let $l_{0}=j_{0}=0$ and $l_{k}=j_{n-k+1}$ for $k=1, \ldots, n$; then $(l_{1}, \ldots, l_{n})$ is
permutation of $(1, \ldots, n)$ and
\[
(K-D-H_{0})(v_{l_{k}}) = \sum_{i=0}^{k-1} w_{l_{i}l_{k}} -1,
\]
thus $K-D-H_{0} \in \mathcal{N}_{0}(G)$ and $K-D \in \mathcal{N}(G)$.

Now assume that $K-D \in \mathcal{N}_{0}(G)$.  Let $D'=K-D$, and from above we have 
$K-D' = D \in \mathcal{N}(G)$.
%
\bigskip

\item Let $D\in\Div(G)$ with $|D|=\emptyset$.    
By Lemma~\ref{reduced}, there is a unique reduced divisor $D_{0} \sim D$.  
Since $|D_{0}|=\emptyset$, Lemma~\ref{reduced-empty} implies that $D_{0}(v_{0}) \le -1$.
By the proof of Lemma~\ref{ReducedGenusLemma}, we have that (\ref{g-reduced}) holds for each $D_{0}(v)$
where $v \in V_{0}$, so for some permutation $(j_{1},\ldots,j_{n})$ of $(1, \ldots, n)$,
\[ 
D_{0}(v_{j_{k}}) \le \sum_{i=0}^{k-1} w_{j_{i}j_{k}} - 1 
\]
and thus $D_{0} \le N_{0}$ for one of the $N_{0} \in \mathcal{N}_{0}(G)$.
Let $P \in \PDiv(G)$ such that $D = D_{0} + P$, and let $N = N_{0} +P$.
Then we have $D \le N$ where $N \in \mathcal{N}(G)$.
\end{enumerate}
\end{proof}


\end{document}